\definecolor{Red}{cmyk}{0,1,1,0}
\definecolor{verde}{cmyk}{1,0,1,0}
\definecolor{loka}{cmyk}{.5,0,1,.5}
\definecolor{azul}{cmyk}{1,1,0,0}
\numberwithin{equation}{section}
\newcommand{\be}{\begin{equation}}
\newcommand{\ee}{\end{equation}}
\newtheorem{definition}{Definition}
\newtheorem{teorema}{Theorem}
\newtheorem{remark}{Remark}
\begin{document}
\title{Stability of $\psi$-Hilfer Impulsive Fractional Differential Equations}
\author{J. Vanterler da C. Sousa}
\address{ Department of Applied Mathematics, Institute of Mathematics,
 Statistics and Scientific Computation, University of Campinas --
UNICAMP, rua S\'ergio Buarque de Holanda 651,
13083--859, Campinas SP, Brazil\newline
e-mail: {\itshape \texttt{vanterlermatematico@hotmail.com, capelas@ime.unicamp.br }}}

\address{$^1$ Department of Mathematics, Shivaji University, Kolhapur 416 004, Maharashtra, India \newline
e-mail: {\itshape \texttt{kdkucche@gmail.com}}}
\author{Kishor D. Kucche$^1$}
\author{E. Capelas de Oliveira}

\begin{abstract} In this paper, we investigate the sufficient conditions for existence and uniqueness of solutions and $\delta$-Ulam-Hyers-Rassias stability of an impulsive fractional differential equation involving  $\psi$-Hilfer fractional derivative. Fixed point approach is used to obtain our main results.

\vskip.5cm
\noindent
\emph{Keywords}: Impulsive fractional differential equation, $\delta$-Ulam-Hyers-Rassias, $\psi$-Hilfer fractional derivative, Banach fixed point.
\newline 
MSC 2010 subject classifications. 26A33, 34A08, 34A12, 34A37, 34DXX.
\end{abstract}
\maketitle

\section{Introduction}

Impulsive differential equations  are used to describe the evolutionary
processes that abruptly change their state at a certain moment. This subject received great importance and remarkable attention from the researchers because of its rich theory  \cite{theory} and applicability in  various branches of science and technology. A natural framework for mathematical modeling of many physical phenomena appearing in the field of  mechanics, ecology, medicine, biology and electrical engineering can be provided  via impulsive differential equations.

Wang and Zhang \cite{WangZhang} investigated the existence and uniqueness of
solutions to differential equations with not instantaneous impulses in a $P\beta$-normed space of the form:
\begin{equation*}
\begin{cases} 
x'(t) =  f\left(t,x(t)\right),\, t\in(s_{i},t_{i+1}],\;
i=0,1,\cdots,m,\\
x(t)  =  g_{i}(t,x(t)), ~ t\in(t_{i},s_{i}],\; i=1,2,\cdots ,m.
\end{cases}
\end{equation*}
Wang et al. in \cite{wangJin}, considering the ordinary nonlinear differential equation with not instantaneous impulses obtained existence and uniqueness of solutions and introduced an interesting concept of stability viz. generalized $\beta$-Ulam-Hyers–Rassias. Zeng et al. \cite{princi} extended the above investigations  to  the class of impulsive integro-differential equations with not instantaneous impulses. 

With the expansion of the fractional calculus \cite{SAMKO,KSTJ,ZE1}, the impulsive fractional differential equations gained a much attention and began to be studied, mainly due to the variety of results, from the stability study, existence to uniqueness \cite{dicheng, stamova,liu,esto9,esto10,palm}. Impulsive 
differential equations in the space of Banach spaces have been dealt in \cite{esto3,esto5,esto11,exis7}. Discussions on concept of solutions, existence and uniqueness results pertaining to impulsive fractional Cauchy differential equations (IFDE) can also be found in the survey paper \cite{Wang01}.  

Wang and Zhang \cite{WangZh} recently, introduced a new class of nonlinear differential equations  with fractional integrable impulses and  established  existence and uniqueness results and introduced new concepts of Bielecki-Ulam's type stability.

As discussed above, in the last years there are many published works related to stability, especially on impulsive differential equations of arbitrary order. Thus, one of the mains purpose of this paper is to expand this range of works in the literature involving stability and thus contribute to the expansion of the area.

In this paper, we apply fixed point approach to study stability of the modified impulsive fractional differential equations 
\begin{equation}\label{eq1}
\left\{ 
\begin{array}{cll}
^{H}\mathbb{D}_{0+}^{\alpha ,\beta ;\psi }x\left( t\right) & = & f\left( t,x\left(t\right) \right) ,\text{ }t\in \left( s_{i},t_{i+1}\right] ,\text{ }i=0,1,...,m \\ 
x\left( t\right) & = & g_{i}\left( t,x\left( t_{i}^{+}\right) \right) ,\text{ }t\in \left( t_{i},s_{i}\right] \,,\text{ }i=1,2,...,m
\end{array}
\right.
\end{equation}
where $^{H}\mathbb{D}_{0+}^{\alpha ,\beta ;\psi }\left( \cdot \right) $ is the $\psi - $Hilfer fractional derivative with $0<\alpha \leq 1$, $0\leq \beta \leq 1$ and $0=t_{0}=s_{0}<t_{1}\leq s_{1}\leq t_{2}<\cdot \cdot \cdot <t_{m}\leq s_{m}<t_{m+1}=T$ are prefixed numbers, $f:\left[ 0,T\right] \times \mathbb{R} \rightarrow \mathbb{R}$ is continuous and $g_{i}$ $:\left[ t_{i},s_{i}\right] \times \mathbb{R}\rightarrow \mathbb{R}$ is continuous for all $i=1,2,...,m$ which is not instantaneous impulses.

The motivation for the elaboration of this paper is the contribution in the stability of fractional differential equations, in particular of the impulsive type. In this sense, as the main purpose of this paper, we investigated the $\delta$-Ulam-Hyers-Rassias stability of the impulsive fractional differential equation by employing the fixed point approach.

For the fundamental properties of $\psi$-Hilfer fractional derivative and the basic theory of fractional differential equation involving  $\psi$-Hilfer fractional derivative, we refer the readers to the papers of Sousa and Oliveira \cite{ZE1,ZE2}.

This paper is divided as follows: in Section 2, we present the concepts of weighted and piecewise weighted function spaces.  We also recall the definitions of $\psi$-Riemann-Liouville fractional integral, $\psi$-Hilfer fractional derivative and define the concept of generalized $\delta$-Ulam-Hyers-Rassias stability. In Section 3, we investigate through the Theorem \ref{teo2} the generalized $\delta$-Ulam-Hyers-Rassias stability of the fractional differential equation.


\section{Preliminaries}
\begin{definition} {\rm\cite{wangJin}}
Suppose $E$ is a vector space over $\mathbb{K}$. A function $\left\Vert \cdot\right\Vert _{\delta }:E\rightarrow \left[ 0,\infty \right) $ $ \left(0< \delta \leq 1\right) $ is called a $\delta-$norm if and only if it satisfies :
\begin{enumerate}
\item $\left\Vert x\right\Vert _{\delta }=0,$ if and only if $x=0$;
\item $\left\Vert \lambda x\right\Vert _{\delta }=\left\vert \lambda \right\vert ^{\delta }\left\Vert x\right\Vert _{\delta }$ for all $\lambda \in K$ and all $x\in E$;
\item $\left\Vert x+y\right\Vert _{\delta }\leq \left\Vert x\right\Vert _{\delta}+\left\Vert y\right\Vert _{\delta }$.
\end{enumerate}
\end{definition}

Let $J=\left[ 0,T\right] ,$ $J^{\prime }=\left( 0,T\right] $ and $C\left( J,\mathbb{R}\right)$ the space of continuous functions. The weighted space $C_{1-\gamma ;\psi }\left( J,\mathbb{R} \right) $ of functions $x$ on $J^{\prime }$ is defined by \cite{ZE1}
\begin{equation*}
C_{1-\gamma ;\psi }\left( J,\mathbb{R}\right) =\left\{ x\in C\left(J^{\prime },\mathbb{R}\right) ;\text{ }\left( \psi \left( t\right) -\psi \left( 0\right) \right) ^{1-\gamma }x\left( t\right) \in C\left( J,\mathbb{R}\right) \right\} ,\text{ }0\leq \gamma <1 
\end{equation*}
with the norm 
\begin{equation*}
\left\Vert x\right\Vert _{C_{1-\gamma ;\psi },\delta }=\underset{t\in J^{\prime }}{\sup }\left\{ \left( \psi \left( t\right) -\psi \left( 0\right) \right) ^{1-\gamma }\left\Vert x\left( t\right) \right\Vert _{\delta }\right\} .
\end{equation*}

Obviously the space $C_{1-\gamma ;\psi }\left( J,\mathbb{R} \right) $ is a Banach space.

The piecewise weighted space $PC_{1-\gamma ;\psi }\left( J,\mathbb{R}\right) $ of functions $x$ on $C\left( \left( t_{k},t_{k+1}\right] ,\mathbb{R} \right) $ is defined by 
\begin{equation*}
PC_{1-\gamma ;\psi }\left( J,\mathbb{R}\right) =\left\{ 
\begin{array}{l}
\left( \psi \left( t\right) -\psi \left( t_{k}\right) -\psi \left( 0\right)
\right) ^{1-\gamma }x\left( t\right) \in C\left( \left( t_{k},t_{k+1}\right]
,\mathbb{R}\right) \text{and }  \\ 
\underset{t\rightarrow t_{k}}{\lim }\left( \psi \left( t\right)
-\psi \left( t_{k}\right) -\psi \left( 0\right) \right) ^{1-\gamma }x\left(
t\right) ,\text{exists for }k=1,2,...,m%
\end{array}%
\right\} 
\end{equation*}
with norm 
\begin{equation*}
\qquad \left\Vert x\right\Vert _{PC_{1-\gamma ;\psi },\delta }:=\underset{ k=1,2,...,m}{\max }\left\{ \underset{t\in \left( t_{k},t_{k+1}\right] }{\sup  }\left( \psi \left( t\right) -\psi \left( t_{k}\right) -\psi \left( 0\right)
\right) ^{1-\gamma }\left\Vert x\left( t\right) \right\Vert _{\delta}\right\}
\end{equation*}
and there exists $x\left( t_{k}^{-}\right) $ and $x\left( t_{k}^{+}\right)$, $k=1,2,...,m$ with $x\left( t_{k}^{-}\right) =x\left( t_{k}^{+}\right)$. The space $PC_{1-\gamma ;\psi }\left( J,\mathbb{R} \right) $ is also a Banach space.

Let $n-1<\alpha \leq n$ with $n\in \mathbb{N}$, $J=\left[ a,b\right] $ be an interval such that $-\infty \leq a<b\leq
+\infty $ and let $f,\psi \in C^{n}\left( \left[ a,b\right],\mathbb{R} \right) $ be two functions such that $\psi $ is increasing and $\psi^{\prime }\left( t\right) \neq 0$, for all $t\in J.$ The $\psi -$Hilfer
fractional derivative denoted by \cite{ZE1}
\begin{equation*}
^{H}\mathbb{D}_{0+}^{\alpha ,\beta ;\psi }y\left( t\right) =I_{0+}^{\beta \left( n-\alpha \right) ;\psi }\left( \frac{1}{\psi ^{\prime }\left( t\right) }\frac{d}{dt}\right) ^{n}I_{0+}^{\left( 1-\beta \right) \left( n-\alpha \right) ;\psi }y\left( t\right) ,
\end{equation*}
where $I_{0+}^{\xi;\psi}(\cdot)$ ($0<\xi\leq 1$) is the $\psi$-Riemann-Liouville fractional integral \cite{ZE1}.

Banach's fixed-point theorem is fundamental in this study, so we will enunciate it below.

\begin{teorema} {\rm \cite{wangJin,ZE3}} Let $\left( X,d\right) $ be a generalized complete metric space. Assume that $\Omega :X\rightarrow X$ is a strictly contractive operator with the Lipschitz constant $L<1.$ If there exists a nonnegative integer $k$ such that $d\left( \Omega ^{k+1},\Omega ^{k}\right) <\infty $ for some \ $x\in X\, $, then the following are true:
\begin{enumerate}
\item The sequence $\left\{ \Omega ^{k}x\right\} $ converges to a fixed point $x^{\ast }$ of $\Omega$;

\item $x^{\ast }$ is the unique fixed point of $\Omega $ in $\Omega ^{\ast }=\left\{ y\in X/d\left( \Omega ^{k}x,y\right) <\infty \right\}$;

\item If $y\in X^{\ast }$, then 
\begin{equation*}
d\left( y,x^{\ast }\right) \leq \frac{1}{1-L}d\left( \Omega y,y\right) .
\end{equation*}
\end{enumerate}
\end{teorema}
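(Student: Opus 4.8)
The plan is to establish the three conclusions in turn by the classical Picard-iteration argument, taking care throughout that the generalized metric $d$ is allowed to take the value $+\infty$, so that triangle-inequality and contraction manipulations are legitimate only among points known to be at finite distance. Write $x_{n}=\Omega^{n}x$ for the iterates starting from the given point $x$, and set $c=d(x_{k+1},x_{k})$, which is finite by hypothesis; replacing $x$ by $\Omega^{k}x$ one could even assume $k=0$, but we keep $k$ general.

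First I would show $\{x_{n}\}_{n\geq k}$ is Cauchy. Since $\Omega$ is strictly contractive, $d(x_{n+1},x_{n})\leq L\,d(x_{n},x_{n-1})$ for every $n\geq 1$, and iterating from $n=k$ gives $d(x_{n+1},x_{n})\leq L^{\,n-k}c$ for $n\geq k$. Then for $m>n\geq k$ the triangle inequality yields $d(x_{m},x_{n})\leq\sum_{j=n}^{m-1}L^{\,j-k}c\leq \frac{L^{\,n-k}}{1-L}\,c$, which is finite and tends to $0$ as $n\to\infty$; hence the sequence is Cauchy and, by completeness, converges to some $x^{\ast}\in X$. A strictly contractive map is continuous for $d$, since $d(x_{n},x^{\ast})\to 0$ forces $d(\Omega x_{n},\Omega x^{\ast})\leq L\,d(x_{n},x^{\ast})\to 0$; passing to the limit in $x_{n+1}=\Omega x_{n}$ gives $\Omega x^{\ast}=x^{\ast}$, which is conclusion (1). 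Letting $m\to\infty$ in $d(\Omega^{k}x,x_{m})\leq\frac{c}{1-L}$ also shows $d(\Omega^{k}x,x^{\ast})\leq\frac{c}{1-L}<\infty$, so $x^{\ast}\in X^{\ast}$.

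For (2), suppose $y\in X^{\ast}$ is also a fixed point of $\Omega$. Then $d(y,x^{\ast})\leq d(y,\Omega^{k}x)+d(\Omega^{k}x,x^{\ast})<\infty$, and from $d(y,x^{\ast})=d(\Omega y,\Omega x^{\ast})\leq L\,d(y,x^{\ast})$ together with $L<1$ and $d(y,x^{\ast})<\infty$ we conclude $d(y,x^{\ast})=0$, i.e. $y=x^{\ast}$; hence $x^{\ast}$ is the unique fixed point of $\Omega$ in $X^{\ast}$.

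Finally, for (3) take $y\in X^{\ast}$; we may assume $d(\Omega y,y)<\infty$, otherwise the inequality is trivial. Applying the argument of the second paragraph to the iterates $\Omega^{n}y$, which satisfy $d(\Omega^{n+1}y,\Omega^{n}y)\leq L^{n}d(\Omega y,y)$, shows they form a Cauchy sequence converging to a fixed point $y^{\ast}$ with $d(y,y^{\ast})\leq\sum_{n\geq0}L^{n}d(\Omega y,y)=\frac{1}{1-L}\,d(\Omega y,y)$. Since $d(\Omega^{k}x,y^{\ast})\leq d(\Omega^{k}x,y)+d(y,y^{\ast})<\infty$, we have $y^{\ast}\in X^{\ast}$, so by the uniqueness in (2), $y^{\ast}=x^{\ast}$, giving $d(y,x^{\ast})\leq\frac{1}{1-L}\,d(\Omega y,y)$. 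I expect no genuine obstacle here: the proof is entirely standard, and the only point requiring attention is the systematic bookkeeping of which distances are finite before invoking the triangle inequality or the Lipschitz bound, precisely because the generalized metric admits the value $+\infty$.
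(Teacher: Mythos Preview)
Your proof is correct and is the standard Diaz--Margolis argument for the alternative fixed-point theorem in a generalized complete metric space. Note, however, that the paper does \emph{not} supply its own proof of this statement: the theorem is quoted from the literature (references \cite{wangJin,ZE3}) and used as a tool in the proof of Theorem~\ref{teo2}. So there is no ``paper's own proof'' to compare against; your argument simply fills in what the authors take for granted.
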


Let the space of piecewise weighted space continuous functions 
\begin{equation*}
X=\left\{ g:J\rightarrow \mathbb{R} /g\in PC_{1-\gamma ;\psi }\left( J, \mathbb{R}\right) ,\text{ }0\leq \gamma <1\right\},
\end{equation*}
with the generalized metric on $X$ given by 
\begin{equation}\label{eq22}
d\left( g,h\right) =\inf \left\{ C_{1}+C_{2}\in \left[ 0,\infty \right]/\left\vert g\left( t\right) -h\left( t\right) \right\vert ^{\delta }\leq\left( C_{1}+C_{2}\right) \left( \varphi ^{\delta }\left( t\right)
+\varepsilon ^{\delta }\right),\text{ for all }t\in J\right\}
\end{equation}
where $C_{1}\in \left\{ C\in \left[ 0,\infty \right] /\left\vert g\left(t\right) -h\left( t\right) \right\vert ^{\delta }\leq C\varphi ^{\delta }\left( t\right) ,\text{ for all }t\in \left( s_{i},t_{i+1}\right],
\text{ }i=0,1,...,m\right\} $ and $C_{2}\in \left\{ C\in \left[ 0,\infty \right] /\left\vert g\left( t\right) -h\left( t\right) \right\vert ^{\delta }\leq C\varepsilon ^{\delta },\text{ for all }t\in \left( t_{i},s_{i}\right], \text{ }i=1,...,m\right\}$. Note that that $\left( X,d\right)$ is a complete generalized metric space.

The function $x\in \widetilde{U}:=PC_{1-\gamma ;\psi }\left( J,\mathbb{R}\right)\overset{m}{\underset{i=0}{\displaystyle\bigcap }}C^{1}\left( \left( s_{i},t_{i+1}\right], \mathbb{R}\right)$ is a solution of the impulsive fractional differential equations
\begin{equation*}
\left\{ 
\begin{array}{cll}
^{H}\mathbb{D}_{0+}^{\alpha ,\beta ;\psi }x\left( t\right) & = & f\left( t,x\left( t\right) \right) ,\text{ }t\in \left( s_{i},t_{i+1}\right] ,\text{ }i=0,1,...,m \\ 
x\left( t\right) & = & g_{i}\left( t,x\left( t_{i}^{+}\right) \right) ,\text{ }t\in \left( t_{i},s_{i}\right] \,,\text{ }i=1,2,...,m \\  I_{0+}^{1-\gamma ;\psi }x\left( 0\right) & = & x_{0}\in \mathbb{R}
\end{array}%
\right. 
\end{equation*}
where $I^{1-\gamma;\psi}_{0+}(\cdot)$ is the $\psi$-Riemann-Liouville fractional integral with $\gamma=\alpha+\beta(1-\alpha)$, if $x$ satisfies $I_{0+}^{1-\gamma ;\psi }x\left( 0\right) =x_{0}$, $x\left( t\right) =g_{i}\left( t,x\left( t_{i}^{+}\right) \right) ,$ $t\in \left( t_{i},s_{i}\right] \,,$ $i=1,2,...,m$ and 
\begin{equation*}
\left\{ 
\begin{array}{cll}
x\left( t\right) & = & \Psi^{\lambda}(t,0) x_{0}+\dfrac{1}{ \Gamma \left( \alpha \right) }\displaystyle\int_{0}^{t}N_{\psi }^{\alpha }\left( t,s\right) f\left( s,x\left( s\right) \right) ds,\text{ }t\in \left[ 0,t_{1}\right] \\ 
x\left( t\right) & = & g_{i}\left( s_{i},x\left( t_{i}^{+}\right) \right) +\dfrac{1}{\Gamma \left( \alpha \right) }\displaystyle\int_{s_{i}}^{t}N_{\psi }^{\alpha }\left( t,s\right) f\left( s,x\left( s\right) \right) ds,\text{ }t\in \left( s_{i},t_{i+1}\right] ,\text{ }i=1,2,...,m.
\end{array}%
\right.
\end{equation*}
with $N_{\psi }^{\alpha }\left( s,t\right) :=\psi ^{\prime }\left( s\right) \left( \psi \left( t\right) -\psi \left( s\right) \right) ^{\alpha -1}$ and $\Psi^{\lambda}(t,0)= \dfrac{(\psi(t)-\psi(0))^{\gamma-1}}{\Gamma(\gamma)}$.

Let $0<\delta \leq 1,$ $\xi \geq 0$, $\varphi \in PC_{1-\gamma ;\psi }\left( J,\mathbb{R}_{+}\right) $ is nondecreasing and
\begin{equation}\label{eq2}
\left\{ 
\begin{array}{cll}
\left\vert ^{H}\mathbb{D}_{0+}^{\alpha ,\beta ;\psi }y\left( t\right) -f\left( t,y\left( t\right) \right) \right\vert & \leq & \varphi \left( t\right), \text{ }t\in \left( s_{i},t_{i+1}\right] ,\text{ }i=0,1,...,m \\ 
\left\vert y\left( t\right) -g_{i}\left( t,y\left( t_{i}^{+}\right) \right) \right\vert & \leq & \xi ,\text{ }t\left( t_{i},s_{i}\right] ,i=1,2,....,m
\end{array}
\right.
\end{equation}

\begin{definition} The {\rm Eq.(\ref{eq1})} is generalized $\delta -$Ulam-Hyers-Rassias stable with respect to $\left( \varphi ,\xi \right) $ if there exists $C_{f,\delta ,g_{i},\varphi }>0$ such that for each solution $y\in \widetilde{U}$ of the inequality {\rm Eq.(\ref{eq2})} there exists a solution $x\in \widetilde{U}$ of the
{\rm Eq.(\ref{eq1})} with 
\begin{equation*}
\left\vert y\left( t\right) -x\left( t\right) \right\vert ^{\delta }\leq C_{f,\delta ,g_{i},\varphi }\left( \xi ^{\delta }+\varphi ^{\delta }\left( t\right) \right) ,\text{ }t\in J.
\end{equation*}
\end{definition}

A function $y\in \widetilde{U}$ is a solution of the inequality {\rm Eq.(\ref{eq2})} if and only if there is $G\in \underset{i=0}{\overset{m}{\displaystyle\bigcap }} C^{1}\left( \left( s_{i},t_{i+1}\right],\mathbb{R} \right) $ and $g\in \underset{i=0}{\overset{m}{\displaystyle\bigcap }}C\left( \left[t_{i},s_{i}\right], \mathbb{R} \right) $, such that:

{\rm (a)} $\left\vert G\left( t\right) \right\vert \leq \varphi \left( t\right)$, $t\in \overset{m}{\underset{i=0}{\displaystyle\bigcup }}\left( s_{i},t_{i+1}\right] $ and  $\left\vert g\left( t\right) \right\vert \leq \xi ,$ $t\in \overset{m}{\underset{i=0}{\displaystyle\bigcup }}\left( t_{i},s_{i}\right]$;

{\rm (b)} $^{H}\mathbb{D}_{0+}^{\alpha ,\beta ;\psi }y\left( t\right) =f\left( t,y\left( t\right) \right) +G\left( t\right) ,$ $t\in \left( s_{i},t_{i+1}\right]$, $i=0,1,...,m$;

{\rm (c)} $y\left( t\right) =g_{i}\left( t,y\left( t_{i}^{+}\right) \right)+g\left( t\right)$, $t\in \left( t_{i},s_{i}\right] ,$ $i=1,2,...,m$.

\begin{remark} If $y\in \widetilde{U}$ is a solution of the inequality {\rm Eq.(\ref{eq2})} the $y$ is a solution of the fractional integral inequality: 
\begin{equation}\label{eq14}
\left\vert y\left( t\right) -g_{i}\left( t,y\left( t_{i}^{+}\right) \right) \right\vert  \leq  \xi ,\text{ }t\in \left( t_{i},s_{i}\right] ,\text{ }i=0,1,...,m
\end{equation}
and
\begin{eqnarray}\label{eq141}
&&\left\vert y\left( t\right) -\Psi ^{\lambda }(t,0)y\left( 0\right) -\dfrac{1}{\Gamma \left( \alpha \right) }\displaystyle\int_{0}^{t}N_{\psi }^{\alpha }\left( t,s\right) f\left( s,y\left( s\right) \right) ds\right\vert \notag \leq \dfrac{1}{\Gamma \left( \alpha \right) }\displaystyle\int_{0}^{t}N_{\psi }^{\alpha }\left( t,s\right) \varphi \left( s\right) ds,\text{ }t\in \left[ 0,t_{1}\right] 
\end{eqnarray}
and 
\begin{eqnarray}\label{eq142}
&&\left\vert y\left( t\right) -g_{i}\left( s_{i},y\left( t_{i}^{+}\right) \right) -\dfrac{1}{\Gamma \left( \alpha \right) }\displaystyle\int_{s_{i}}^{t}N_{\psi }^{\alpha }\left( t,s\right) f\left( s,y\left(
s\right) \right) ds\right\vert \notag \\
&\leq &\xi +\dfrac{1}{\Gamma \left( \alpha \right) }\displaystyle\int_{s_{i}}^{t}N_{\psi }^{\alpha }\left( t,s\right) \varphi \left( s\right)ds,\text{ }t\in \left( s_{i},t_{i+1}\right] ,\text{ } i=1,2,...,m.
\end{eqnarray}

\end{remark}


\section{$\delta -$Ulam-Hyers-Rassias stability}
In this section, we present the main result of this paper, the stability of the type generalized $\delta-$Ulam-Hyers-Rassias for the impulsive fractional differential equation Eq.(\ref{eq1}), by means Banach's the fixed point theorem.

Before investigating the main result in this paper, we list some essential conditions for proof of the theorem:

{\rm (H1)} $f\in C_{1-\gamma ;\psi }\left( J\times \mathbb{R},\mathbb{R}\right)$;

{\rm (H2)} There exists a positive constant $L_{f}$ such that 
\begin{equation*}
\left\vert f\left( t,u_{1}\right) -f\left( t,u_{2}\right) \right\vert \leq L_{f}\left\vert u_{1}-u_{2}\right\vert, \text{ } \mbox{for $t\in J$ and $u_{1},u_{2}\in \mathbb{R}$;}
\end{equation*}

{\rm (H3)} $g_{i}\in C_{1-\gamma ;\psi }\left( \left[ t_{i},s_{i}\right] \times \mathbb{R},\mathbb{R}\right) $ and there are positive constants $L_{g_{i}}$,  $i=1,2,...,m$ such that 
\begin{equation*}
\left\vert g_{i}\left( t,u_{1}\right) -g_{2}\left( t,u_{2}\right) \right\vert \leq L_{g_{i}}\left\vert u_{1}-u_{2}\right\vert, \text{ } \mbox{for $\ t\in \left[ t_{i},s_{i}\right] $ and $u_{1},u_{2}\in \mathbb{R}$;}
\end{equation*}

{\rm (H4)} Let $\varphi \in C_{1-\gamma ;\psi }\left( J,\mathbb{R}_{+}\right) $ be a nondecreasing function. There exists $C_{\varphi }>0$ such that 
\begin{equation*}
\frac{1}{\Gamma \left( \alpha \right) }\int_{0}^{t}N_{\psi }^{\alpha }\left(t,s\right) \varphi \left( s\right) ds\leq c_{\varphi }\varphi \left( t\right), \text{ } \mbox{ for $t\in J$.}
\end{equation*}

\begin{teorema}\label{teo2} Assume the conditions {\rm (H1)-(H4)} be satisfies. If there exists a function $y\in \widetilde{U}$  satisfying {\rm Eq.(\ref{eq2})}, then there exists a unique solution $y_{0}:J\rightarrow \mathbb{R}$ such that
\begin{equation}\label{eq3}
y_{0}\left( t\right) =\left\{ 
\begin{array}{l}
\Psi^{\lambda}(t,0)x\left( 0\right) +\displaystyle\frac{1}{\Gamma \left( \alpha \right) }\displaystyle\int_{0}^{t}N_{\psi }^{\alpha }\left(t,s\right) f\left( s,y_{0}\left( s\right) \right) ds,\text{ }t\in \left[ 0,t_{1}\right]  \\ 
g_{i}\left( t,y_{0}\left( t_{i}^{+}\right) \right) ,\text{ }t\in \left( t_{i},s_{i}\right] ,\text{ }i=1,2,...,m \\  g_{i}\left( s_{i},y_{0}\left( t_{i}^{+}\right) \right) +\dfrac{1}{\Gamma \left( \alpha \right) }\displaystyle\int_{s_{i}}^{t}N_{\psi }^{\alpha }\left(t,s\right) f\left( s,y_{0}\left( s\right) \right) ds,\text{ }t\in \left( s_{i},t_{i+1}\right]  \\ 
i=1,2,...,m.
\end{array}
\right. 
\end{equation}
and 
\begin{equation}\label{eq56}
\left\vert y\left( t\right) -y_{0}\left( t\right) \right\vert ^{\delta }\leq  \frac{\left( 1+C_{\varphi }^{\delta }\right) \left( \varphi ^{\delta }\left( t\right) +\xi ^{\delta }\right) }{1-\Phi },\text{ }t\in J
\end{equation}
where 
\begin{equation}\label{eq25}
\Phi :=\underset{i=1,2,...,m}{\max }\left\{ L_{g_{i}}^{\delta }+L_{f}^{\delta }C_{\varphi }^{\delta }\right\} .
\end{equation}
\end{teorema}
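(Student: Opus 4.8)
The plan is to recast Eq.(\ref{eq1}) as a fixed point problem on the complete generalized metric space $(X,d)$ built in Section~2 and to invoke the generalized Banach contraction theorem recalled there. Define $\Omega:X\to X$ by letting $(\Omega g)(t)$ be the right-hand side of Eq.(\ref{eq3}) with $y_{0}$ replaced by $g$: that is, $(\Omega g)(t)=\Psi^{\lambda}(t,0)x(0)+\frac{1}{\Gamma(\alpha)}\int_{0}^{t}N_{\psi}^{\alpha}(t,s)f(s,g(s))\,ds$ on $[0,t_{1}]$, $(\Omega g)(t)=g_{i}(t,g(t_{i}^{+}))$ on $(t_{i},s_{i}]$, and $(\Omega g)(t)=g_{i}(s_{i},g(t_{i}^{+}))+\frac{1}{\Gamma(\alpha)}\int_{s_{i}}^{t}N_{\psi}^{\alpha}(t,s)f(s,g(s))\,ds$ on $(s_{i},t_{i+1}]$. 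Using (H1), (H3) and the mapping properties of the $\psi$-Riemann--Liouville fractional integral one checks that $\Omega g\in X$, so $\Omega$ is well defined, and a fixed point of $\Omega$ is exactly a function of the form Eq.(\ref{eq3}).

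The heart of the argument is to show that $\Omega$ is strictly contractive with Lipschitz constant $\Phi$ from Eq.(\ref{eq25}); note this tacitly requires the standing hypothesis $\Phi<1$, which is what makes the denominator in Eq.(\ref{eq56}) positive. Fix $g,h\in X$ and take constants $C_{1},C_{2}$ admissible for $(g,h)$ as in Eq.(\ref{eq22}), so that $|g(t)-h(t)|^{\delta}\le C_{1}\varphi^{\delta}(t)$ on each $(s_{i},t_{i+1}]$ and $|g(t)-h(t)|^{\delta}\le C_{2}\varepsilon^{\delta}$ on each $(t_{i},s_{i}]$. On $(t_{i},s_{i}]$, (H3) yields $|(\Omega g)(t)-(\Omega h)(t)|^{\delta}\le L_{g_{i}}^{\delta}|g(t_{i}^{+})-h(t_{i}^{+})|^{\delta}$. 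On $[0,t_{1}]$ and on $(s_{i},t_{i+1}]$ I would use the subadditivity of $r\mapsto r^{\delta}$ (valid for $0<\delta\le 1$) to separate the impulse term from the integral term; for the integral term, after taking $\delta$-th roots one pulls the constant $C_{1}^{1/\delta}$ out of $\int N_{\psi}^{\alpha}(t,s)\varphi(s)\,ds$ and applies (H2) together with (H4) to get a bound $L_{f}^{\delta}C_{\varphi}^{\delta}C_{1}\varphi^{\delta}(t)$, while the impulse term is controlled by $L_{g_{i}}^{\delta}|g(t_{i}^{+})-h(t_{i}^{+})|^{\delta}$, and monotonicity of $\varphi$ is used to replace $\varphi$ at interior points by $\varphi(t)$. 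Combining the three cases, collecting the worst constant over $i$, and taking the infimum over admissible $(C_{1},C_{2})$ gives $d(\Omega g,\Omega h)\le\Phi\,d(g,h)$.

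To run the contraction theorem one needs a point of $X$ at finite distance from its image; the natural candidate is the given approximate solution $y$. Since $y$ satisfies Eq.(\ref{eq2}), the Remark above provides the integral inequalities Eq.(\ref{eq14}), Eq.(\ref{eq141}) and Eq.(\ref{eq142}). Feeding these into the three branches of $\Omega$ and using (H4) one obtains $|y(t)-(\Omega y)(t)|^{\delta}\le C_{\varphi}^{\delta}\varphi^{\delta}(t)$ on $[0,t_{1}]$, $|y(t)-(\Omega y)(t)|^{\delta}\le\xi^{\delta}$ on each $(t_{i},s_{i}]$, and $|y(t)-(\Omega y)(t)|^{\delta}\le C_{\varphi}^{\delta}\varphi^{\delta}(t)+\xi^{\delta}$ on each $(s_{i},t_{i+1}]$, whence $d(\Omega y,y)\le 1+C_{\varphi}^{\delta}<\infty$.

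It then remains to apply the generalized Banach theorem with $k=0$ and starting point $y$: $\Omega$ has a unique fixed point $y_{0}$ in $X^{\ast}=\{z\in X:\ d(y,z)<\infty\}$, and by construction $y_{0}$ is given by Eq.(\ref{eq3}); moreover, since $y\in X^{\ast}$, conclusion~(3) of that theorem gives $d(y,y_{0})\le\frac{1}{1-\Phi}\,d(\Omega y,y)\le\frac{1+C_{\varphi}^{\delta}}{1-\Phi}$, and unfolding the definition of $d$ (identifying the constant $\varepsilon$ of Eq.(\ref{eq22}) with $\xi$) yields precisely Eq.(\ref{eq56}). I expect the contraction step of the second paragraph to be the main difficulty: one must track the value $g(t_{i}^{+})$ that feeds the impulse functions, verify that the resulting mixed bounds in $\varphi^{\delta}$ and $\varepsilon^{\delta}$ are consistent with the two-parameter metric Eq.(\ref{eq22}), and handle the $0<\delta\le 1$ power inequalities (subadditivity, and the $\delta$-th power of the fractional integral) without losing the constant $C_{\varphi}$.
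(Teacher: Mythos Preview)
Your proposal is correct and follows essentially the same route as the paper: define the operator $\Omega$ by the right-hand side of Eq.(\ref{eq3}), verify the contraction estimate $d(\Omega g,\Omega h)\le\Phi\,d(g,h)$ via the same three-case analysis using (H2)--(H4) and subadditivity of $r\mapsto r^{\delta}$, bound $d(\Omega y,y)\le 1+C_{\varphi}^{\delta}$ from the integral inequalities in the Remark, and then invoke the generalized Banach theorem. The only cosmetic difference is that the paper first takes an arbitrary $g_{0}\in X$ to check $d(\Omega g_{0},g_{0})<\infty$ and then argues that every $g\in X$ has $d(g_{0},g)<\infty$ (so that $X^{\ast}=X$ and the fixed point is unique in all of $X$), whereas you go straight to $y$ as the base point; your streamlining is harmless for the stated conclusion.
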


\begin{proof}
For prove of this result, consider the operator $ \Omega :X\rightarrow X$ given by
\begin{equation}\label{eq4}
\Omega x\left( t\right) =\left\{ 
\begin{array}{l}
\Psi^{\lambda}(t,0)x\left( 0\right) +\displaystyle\frac{1}{\Gamma \left( \alpha \right) }\displaystyle\int_{0}^{t}N_{\psi }^{\alpha }\left(t,s\right) f\left( s,x_{0}\left( s\right) \right) ds,\text{ }t\in \left[ 0,t_{1}\right]  \\ 
g_{i}\left( t,x_{0}\left( t_{i}^{+}\right) \right) ,\text{ }t\in \left( t_{i},s_{i}\right], \text{ }i=1,2,...,m \\  g_{i}\left( s_{i},x_{0}\left( t_{i}^{+}\right) \right) +\dfrac{1}{\Gamma \left( \alpha \right) }\displaystyle\int_{s_{i}}^{t}N_{\psi }^{\alpha }\left(t,s\right) f\left( s,x_{0}\left( s\right) \right) ds, \\ 
\text{ }t\in \left( s_{i},t_{i+1}\right],\text{ } i=1,2,...,m.
\end{array}
\right. 
\end{equation}
for all $x\in X$ and $t\in \left[ 0,T\right]$. Note that, $\Omega $ is a well defined operator according with condition (H1).

In order to use Banach's fixed point theorem, we prove that $\Omega $ is strictly contractive on $X,$ in three cases. Note that 
\begin{equation*}
\left\vert g\left( t\right) -h\left( t\right) \right\vert ^{\delta }\leq
\left\{ 
\begin{array}{l}
C_{1}\varphi ^{\delta }\left( t\right) ,\text{ }t\in \left( s_{i},t_{i+1} \right] ,\text{ }i=0,1,...,m \\ 
C_{2}\xi ^{\delta },\text{ }t\in \left( t_{i},s_{i}\right] ,\text{ }i=1,...,m
\end{array}%
\right.
\end{equation*}
is equivalent to 
\begin{equation}\label{eq5}
\left\vert g\left( t\right) -h\left( t\right) \right\vert \leq \left\{ 
\begin{array}{l}
C_{1}^{\delta }\varphi \left( t\right) ,\text{ }t\in \left( s_{i},t_{i+1}\right] ,\text{ }i=0,1,...,m \\ 
C_{2}^{\delta }\xi ,\text{ }t\in \left( t_{i},s_{i}\right] ,\text{ }i=1,...,m
\end{array}%
\right.
\end{equation}

Using the definition of $\Omega $ in Eq.(\ref{eq4}), (H2), (H3) and Eq.(\ref{eq5}), we have the following cases:

Case 1: For $t\in \left[ 0,t_{1}\right] $, and by the hypothesis (H2), (H4) and Eq.(\ref{eq5}), we get 
\begin{eqnarray*}
\left\vert \Omega g\left( t\right) -\Omega h\left( t\right) \right\vert ^{\delta }  \notag &\leq &\left\vert \frac{1}{\Gamma \left( \alpha \right) }\int_{0}^{t}N_{\psi }^{\alpha }\left(t,s\right) f\left( s,g\left( s\right) \right) ds-\frac{1}{\Gamma \left( \alpha \right) }\int_{0}^{t}N_{\psi }^{\alpha }\left(t,s\right) f\left( s,h\left( s\right) \right) ds\right\vert ^{\delta } \notag \\
&\leq &L_{f}^{\delta }\left( \frac{1}{\Gamma \left( \alpha \right) }\int_{0}^{t}N_{\psi }^{\alpha }\left(t,s\right) \left\vert g\left( s\right) -h\left( s\right) \right\vert ds\right) ^{\delta } \leq L_{f}^{\delta }C_{1}c_{\varphi }^{\delta }\varphi \left( t\right)
^{\delta }.
\end{eqnarray*}

Case 2: For $t\in \left( t_{i},s_{i}\right] ,$ and by the hypothesis (H3) and Eq.(\ref{eq4}), we obtain 
\begin{eqnarray*}
\left\vert \Omega g\left( t\right) -\Omega h\left( t\right) \right\vert ^{\delta }  &\leq &\left\vert g_{i}\left( t,g\left( t_{i}^{+}\right) \right) -g_{i}\left( t,h\left( t_{i}^{+}\right) \right) \right\vert ^{\delta } \leq \left( L_{g_{i}}\left\vert g\left( t_{i}^{+}\right) -h\left( t_{i}^{+}\right) \right\vert \right) ^{\delta } \leq L_{g_{i}}^{\delta }C_{2}\xi ^{\delta }.
\end{eqnarray*}

Case 3: For $t\in \left( s_{i},t_{i+1}\right] $ and using the hypothesis (H1), (H2), (H3) and Eq.(\ref{eq4}), we have 
\begin{eqnarray*}
&&\left\vert \Omega g\left( t\right) -\Omega h\left( t\right) \right\vert ^{\delta }  \notag \\
&\leq &\left\vert g_{i}\left( s_{i},g\left( t_{i}^{+}\right) \right) -g_{i}\left( s_{i},h\left( t_{i}^{+}\right) \right) \right\vert ^{\delta }+\left\vert \frac{1}{\Gamma \left( \alpha \right) }\int_{s_{i}}^{t}N_{\psi
}^{\alpha }\left(t,s\right) \left( f\left( s,g\left( s\right) \right) -f\left( s,h\left( s\right) \right) \right) ds\right\vert ^{\delta }  \notag \\
&\leq &L_{g_{i}}^{\delta }C_{2}\xi ^{\delta }+L_{f}^{\delta }C_{1}\left(  \frac{1}{\Gamma \left( \alpha \right) }\int_{0}^{t}N_{\psi }^{\alpha }\left(t,s\right) \varphi \left( s\right) ds\right) ^{\delta }  
\leq \left( L_{g_{i}}^{\delta }+L_{f}^{\delta }c_{\varphi }^{\delta }\right) \left( C_{1}+C_{2}\right) \left( \varphi ^{\delta }\left( t\right) +\xi ^{\delta }\right) .
\end{eqnarray*}

Then, we obtain
\begin{eqnarray*}
\left\vert \Omega g\left( t\right) -\Omega h\left( t\right) \right\vert ^{\delta } &\leq &\underset{i=1,2,...,m}{\max }\left( L_{g_{i}}^{\delta }+L_{f}^{\delta }C_{\varphi }^{\delta }\right) \left( C_{1}+C_{2}\right)
\left( \varphi ^{\delta }\left( t\right) +\xi ^{\delta }\right)  \notag \\ &=&\Phi \left( C_{1}+C_{2}\right) \left( \varphi ^{\delta }\left( t\right) +\xi ^{\delta }\right) ,\text{ }t\in J.
\end{eqnarray*}

Hence, we get 
\begin{equation*}
d\left( \Omega g,\Omega h\right) \leq \Phi d\left( g,h\right)
\end{equation*}
for any $g,h\in X$ and since the condition Eq.(\ref{eq25}).

Now, we take $g_{0}\in X$ and from the piecewise continuous property of $g_{0}$ and $\Omega g_{0}$, then there exists a constant $ 0<G_{1}<\infty $ so that 
\begin{eqnarray*}
\left\vert \Omega g_{0}\left( t\right) -g_{0}\left( t\right) \right\vert ^{\delta } &=&\left\vert \Psi^{\lambda}(t,0)x\left( 0\right) +\frac{1 }{\Gamma \left( \alpha \right) }\int_{0}^{t}N_{\psi }^{\alpha }\left(t,s\right) f\left( s,g_{0}\left( s\right) \right) ds-g_{0}\left( t\right) \right\vert ^{\delta }  \notag \\&\leq &G_{1}\varphi ^{\delta }\left( t\right) \leq G_{1}\left( \varphi ^{\delta }\left( t\right) +\xi ^{\delta }\right) ,\text{ }t\in \left[ 0,t_{1}\right] .
\end{eqnarray*}

On the other hand, also $G_{2}$ and $G_{3}$ with $0<G_{2}<\infty $ and $0<G_{3}<\infty ,$ such that, 
\begin{eqnarray*}
\left\vert \Omega g_{0}\left( t\right) -g_{0}\left( t\right) \right\vert ^{\delta } &=&\left\vert g_{i}\left( t,g_{0}\left( t_{i}^{+}\right) \right) -g_{0}\left( t\right) \right\vert ^{\delta }  \notag \\
&\leq &G_{2}\xi ^{\delta }\leq G_{2}\left( \varphi ^{\delta }\left( t\right)+\xi ^{\delta }\right) ,\text{ }t\in \left( t_{i},s_{i}\right] ,\text{ }i=1,2,...,m
\end{eqnarray*}
and
\begin{eqnarray*}
\left\vert \Omega g_{0}\left( t\right) -g_{0}\left( t\right) \right\vert ^{\delta } &=&\left\vert g_{i}\left( s_{i},g_{0}\left( t_{i}^{+}\right) \right) +\frac{1}{\Gamma \left( \alpha \right) }\int_{s_{i}}^{t}N_{\psi
}^{\alpha }\left(t,s\right) f\left( s,g_{0}\left( s\right) \right) ds-g_{0}\left( t\right) \right\vert ^{\delta }  \notag \\ &\leq &G_{3}\left( \varphi ^{\delta }\left( t\right) +\xi ^{\delta }\right), \text{ }t\in \left( s_{i},t_{i+1}\right] ,\text{ }i=1,2,...,m
\end{eqnarray*}
since $f,g_{i}$ and $g_{0}$ are bounded on $J$ and $\varphi \left( \cdot \right) +\xi ^{\delta }>0$. In this sense, Eq.(\ref{eq22}) implies that
\begin{equation*}
d\left( \Omega g_{0},g_{0}\right) <\infty .
\end{equation*}

Note that, exists a continuous function $y_{0}:J\rightarrow \mathbb{R} $ such that $\Omega ^{n}y_{0}\rightarrow y_{0}$ in $\left( X,d\right) $ as $n\rightarrow \infty $ and $\Omega y_{0}=y_{0}$, that is $y_{0}$ satisfies Eq.(\ref{eq4}) for every $t\in J$ (Banach's fixed point theorem).

For finally the proof this theorem, we check that $0<C_{g}<\infty $ such that 
\begin{equation*}
\left\vert g_{0}\left( t\right) -g\left( t\right) \right\vert ^{\delta }\leq C_{g}\left( \varphi ^{\delta }\left( t\right) +\xi ^{\delta }\right), \text{ } \mbox{for any $t\in J$}
\end{equation*}
and assuming that $g$, $g_{0}$ are bounded on $J$ and $\underset{t\in J}{\min }\left( \varphi ^{\delta }\left( t\right) +\xi ^{\delta }\right) >0$.

Then, we get $d\left( g_{0},g\right) <\infty $ for all $g\in X$, that is $X=\left\{ g\in X/d\left( g_{0},g\right) <\infty \right\}.$ Therefore, we obtain that $y_{0}$ is the unique solution continuous function with the property Eq.(\ref{eq4}).

On the other hand, using the hypotheses (H1)-(H4), Eq.(\ref{eq14}), Eq.(\ref{eq141}) and Eq.(\ref{eq142}) it follows that 
\begin{equation}\label{eq13}
d\left( y,\Omega y\right) \leq 1+C_{\varphi }^{\delta }.
\end{equation}

Thus, from Eq.(\ref{eq13}), we have
\begin{equation*}
d\left( y,y_{0}\right) \leq \frac{d\left( \Omega y,y\right) }{1-\Phi }\leq \frac{1+C_{\varphi }^{\delta }}{1-\Phi },
\end{equation*}
which means that Eq.(\ref{eq56}) is true for $t\in J$ .
\end{proof}

\bibliography{ref}
\bibliographystyle{plain}

\end{document}